\theoremstyle{plain}
\newtheorem{thm}{\protect\theoremname}
\theoremstyle{plain}
\newtheorem{defn}[thm]{\protect\definitionname}
\theoremstyle{plain}
\newtheorem{prop}[thm]{\protect\propositionname}
\theoremstyle{plain}
\newtheorem{lem}[thm]{\protect\lemmaname}
\theoremstyle{plain}
\theoremstyle{plain}
\newtheorem{rem}[thm]{\protect\Remarkname}
\theoremstyle{plain}
\providecommand{\lemmaname}{Lemma}
\providecommand{\definitionname}{Definition}
\providecommand{\propositionname}{Proposition}
\providecommand{\theoremname}{Theorem}
\providecommand{\Remarkname}{Remark}
\providecommand{\conjecturename}{Conjecture}
\providecommand{\corollaryname}{Corollary}
\newcommand{\Rn}{\ensuremath  \mathbb{R}^N_+}
\newcommand{\blambda}{\ensuremath \bar{\lambda}}
\newcommand{\vep}{\varepsilon}
\newcommand{\be}{\begin{equation}}
	\newcommand{\ee}{\end{equation}}
\newcommand{\tu}{\tilde{u}}
\thanks{\it 2020 Mathematics Subject
	Classification: 	35J60, 35J75, 35B06}
\thanks{$^*$ corresponding author}
\title[Monotonicity results in half spaces]{Monotonicity results in half spaces for quasilinear elliptic equations involving a singular term}
\email{luigi.montoro@unical.it., luigi.muglia@unical.it, }\email{berardino.sciunzi@unical.it}
\address{Department of Mathematics and Computer Science, UNICAL, Rende (CS), Italy}
\author[L.\ Montoro]{Luigi Montoro}
\author[L.\ Muglia]{Luigi Muglia}
\author[B.\ Sciunzi]{Berardino Sciunzi $^*$}
\begin{document}

\begin{abstract}
We consider positive solutions to $\displaystyle -\Delta_p u=\frac{1}{u^\gamma}+f(u)$ under zero Dirichlet condition  in the half space. Exploiting a prio-ri estimates and the moving plane technique, we prove that  any solution is monotone increasing in the direction orthogonal to the boundary. 
\end{abstract}
\keywords{$p$-Laplacian, Singular solution, Monotonicity in half space.}

\maketitle

\section{Introduction}
We deal with the study of monotonicity properties of positive weak solutions to quasilinear elliptic problems involving singular nonlinearities,  in the half space.
In particular we consider the problem:
\begin{equation}\tag{$\mathcal P_{\gamma}$}\label{MP}
\begin{cases}
\displaystyle -\Delta_p u=\frac{1}{u^\gamma}+f(u) & \text{in} \,\, \mathbb{R}^N_+\\
u>0 & \text{in} \,\,  \mathbb{R}^N_+\\
u=0 & \text{on} \,\, \partial \mathbb{R}^N_+,
\end{cases}
\end{equation}
where $N\geq 2$, $\gamma >1$ and $\mathbb{R}^N_+:=\{x\in \mathbb{R}^N:x_N>0\}$. It is well known that solutions of $p$-Laplace equations are generally of class $C^{1,\alpha}$ (see \cite{Di,Li,T}) and the equation has to be understood in the weak sense. The presence of the singular term $\displaystyle {u^{-\gamma}}$ also causes that solutions are not smooth up to the boundary and, more important, are not in suitable energy spaces near the boundary. There is a wide literature regarding this challenging issue, see e.g. \cite{A,Bo,BM,Br,GrSc, CraRaTa, DP,FMSL,LM, nostro2, oliva,OlPet} and the references therein.  

Monotonicity properties of the solutions represent a crucial information that brings advantage 
 in many applications: blow-up analysis, a-priori estimates and also in the proofs of Liouville type theorems. The study of the monotonicity of the solutions was started in the semilinear nondegenerate case in a series of papers.  We refer to \cite{BCN1,BCN2,BCN3} and to \cite{Dan1,Dan2}.  The more general case of domains that are epigraphs has been recently solved in \cite{BFS}.  The study of the quasilinear case with smooth nonlinearities was started and settled in   \cite{FMS, FMS MathAnn, FMRS, FMS PISA}. Here we address the case in which the nonlinearity has a singular part. \\

\noindent In the sequel we will denote by
$$
 \Sigma_{\lambda}:=\{x\in \mathbb{R}^N_+:0<x_N<{\lambda}\},
$$
and, for a given $x\in\mathbb{R}^N_+$ we will use the notation $x=(x',x_N)$ where $x'\in\mathbb{R}^{n-1}$.

About the solution $u$ and the nonlinearity  $f:[0,+\infty)\to \mathbb{R}$ we will suppose the following: \\\begin{itemize}
 \item[{\bf (hp):} (i)] $\displaystyle \frac{1}{t^\gamma}+f(t)>0$ whenever $t>0$.
\item[(ii)] $f:[0,+\infty)\to \mathbb{R}$ is locally Lipschitz continuous.  
\item[(iii)] The solution $u$ is locally bounded in  strips, namely  for all ${\lambda}>0$ there exists $\tilde{C}:=\tilde C(\lambda)$ such that 
$$
 0\leq u(x)\leq \tilde C, \qquad x\in \Sigma_{\lambda}.
$$
\end{itemize}
\ \\
\noindent As recalled above   the solutions, in general, have not $W^{1,p}$ regularity up to the boundary (see \cite{LM}) and the equation has to be understood in the following distributional way:

\begin{defn}We say that $u$ is a solution to \eqref{MP} if and only if  $u\in W^{1,p}_{loc}(\mathbb{R}^N_+)$, $\displaystyle {\rm{ess inf}}_K\, u>0$
	for any compact $K\subset \mathbb R^N_+$ and 
\begin{equation}\label{w-sol}
	\int_{\Rn}|\nabla u|^{p-2}(\nabla u,\nabla \varphi)=\int_{\Rn}\left(\frac{1}{u^\gamma}+f(u)\right)\varphi \qquad \forall \varphi\in C^1_c(\mathbb{R}_+^N).
\end{equation}
Moreover, the zero Dirichlet boundary condition also has to be understood in the weak meaning
\[
(u-\varepsilon)^+\varphi\chi_{\mathbb{R}^N_+}\in W^{1,p}_0(\mathbb{R}^N_+)\qquad\forall\,\varphi\in C^1_c(\mathbb{R}^N)\,.
\]
\end{defn}
\begin{rem}
In some cases, see e.g. \cite{Bo} for the case of bounded domains, when dealing with singular equations, it is natural to understand the zero Dirichlet datum requiring that $u^{\frac{\gamma+p-1}{p}}\in W^{1,p}_0(\Omega)$. We stress the fact that such assumption already implies that $(u-\varepsilon)^+\in W^{1,p}_0(\Omega)$ so that our analysis do apply also to this case. 
\end{rem}
 We are now ready to state our main result:

\begin{thm}\label{mainthmczz}
 	Let   $u$  be a solution  to \eqref{MP}. Then, assuming  ${\bf (hp)}$, it follows that 
  $u$ is monotone increasing w.r.t. the $x_N$-direction, that is
 	\[
 	\frac{\partial u}{\partial x_N}\,\geq\,0 \quad \text{in}\quad \mathbb{R}^N_+\,.
 	\]
 \end{thm}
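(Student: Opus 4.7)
\medskip
\noindent\emph{Proof plan.} I would apply the classical moving plane method in the $x_N$-direction. For each $\lambda>0$ introduce the reflected function
\[
u_\lambda(x',x_N):=u(x',2\lambda-x_N),\qquad (x',x_N)\in\Sigma_\lambda,
\]
which, by reflection-invariance of $-\Delta_p$, again satisfies $-\Delta_p u_\lambda=u_\lambda^{-\gamma}+f(u_\lambda)$ on $\Sigma_\lambda$. The monotonicity statement is equivalent to $u\leq u_\lambda$ in $\Sigma_\lambda$ for every $\lambda>0$. The key structural remark is that on $\{x_N=0\}$ one has $u=0<u_\lambda(x',0)=u(x',2\lambda)$, on $\{x_N=\lambda\}$ one has $u=u_\lambda$, and by continuity of $u$ up to $\partial\mathbb{R}^N_+$ (which comes from the boundary regularity theory for the singular $p$-Laplace problem and the weak interpretation of the Dirichlet datum recalled in the Definition above), the contact set $A_\lambda:=\{u>u_\lambda\}\subset\Sigma_\lambda$ stays at positive distance from $\partial\mathbb{R}^N_+$. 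In particular $u^{-\gamma}$ is bounded on $A_\lambda$: the apparent singularity plays no role in the comparison, and the truncated function $(u-u_\lambda)^+$, suitably cut off in $x'$, is an admissible test function in \eqref{w-sol}.

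\medskip
\noindent The argument then proceeds in two steps. \textbf{Starting step}: for $\lambda$ small, test the difference of the equations for $u$ and $u_\lambda$ with $(u-u_\lambda)^+\eta_R^p$, where $\eta_R(x')\in C^\infty_c(\mathbb{R}^{N-1})$ is a cutoff equal to $1$ on $\{|x'|<R\}$ with $|\nabla\eta_R|\leq 2/R$. The singular contribution $\int_{A_\lambda}(u^{-\gamma}-u_\lambda^{-\gamma})(u-u_\lambda)^+\eta_R^p\leq 0$ has the favourable sign, while the local Lipschitz bound on $f$ coming from (ii) and (iii) gives $\int(f(u)-f(u_\lambda))(u-u_\lambda)^+\eta_R^p\leq L\int ((u-u_\lambda)^+)^2\eta_R^p$. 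Combining this with the standard algebraic inequalities for $|\xi|^{p-2}\xi-|\eta|^{p-2}\eta$ and a weighted Poincar\'e inequality on the strip $\Sigma_\lambda$ (with weight $|\nabla u|^{p-2}+|\nabla u_\lambda|^{p-2}$, in the Damascelli--Sciunzi framework), the Lipschitz term is absorbed into the coercive form provided $\lambda$ is small. Sending $R\to+\infty$, the lower order terms of size $O(1/R)$ arising from $\nabla\eta_R$ are killed using the local bound (iii), yielding $(u-u_\lambda)^+\equiv 0$. \textbf{Continuation}: set $\lambda^*:=\sup\{\lambda>0:u\leq u_\mu\text{ in }\Sigma_\mu\text{ for every }\mu\leq\lambda\}$ and assume by contradiction that $\lambda^*<+\infty$. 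Continuity at the supremum gives $u\leq u_{\lambda^*}$ in $\Sigma_{\lambda^*}$, and a strong comparison principle of Damascelli--Sciunzi type, applied componentwise on $\Sigma_{\lambda^*}\setminus Z_u$ with $Z_u=\{\nabla u=0\}$, upgrades this to a strict inequality $u<u_{\lambda^*}$ (the equality alternative is ruled out at least in the component touching $\partial\mathbb{R}^N_+$, thanks to the differing boundary values). A quantitative form of this strict inequality on compact sets disjoint from $Z_u$, joined with the narrow-strip step applied to the remaining thin region of $\Sigma_{\lambda^*+\varepsilon}$, produces $u\leq u_{\lambda^*+\varepsilon}$, contradicting the maximality of $\lambda^*$.

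\medskip
\noindent The central obstacle is to handle three features simultaneously: the singularity of $u^{-\gamma}$ at $\partial\mathbb{R}^N_+$, the degeneracy of $\Delta_p$, and the unboundedness of the domain. The geometric observation about $A_\lambda$ defuses the first, and hypothesis (iii) is the precise form of a-priori bound needed to let the tangential cutoff escape to infinity. I expect the most delicate step to be the quantitative strong comparison principle at $\lambda^*$ --- in particular the control of the critical set $Z_u$ and the preservation of strict inequality up to $\partial\mathbb{R}^N_+$ --- because the singular source term non-trivially couples the sign of $u_{\lambda^*}-u$ with the boundary behaviour of $u$ itself, and this coupling has to be closed uniformly on strips without relying on global energy estimates, which fail in general due to the lack of $W^{1,p}$-regularity at the boundary.
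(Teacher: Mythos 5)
Your overall strategy (moving planes in the $x_N$-direction, with the observation that the barrier bounds $c_s x_N^{\beta}\le u\le C_S x_N^{\beta}$ force the contact set $\{u>u_\lambda\}$ away from $\partial\mathbb{R}^N_+$, so the singular term only enters with the good sign) is the right skeleton and matches the paper's, but two of your key steps would fail as written, precisely because the strip is unbounded in the $x'$-directions. First, in your starting step you claim the cutoff errors coming from $\nabla\eta_R$ are $O(1/R)$ and disappear using (iii): in fact those terms are integrals over $\Sigma_\lambda\cap\{R<|x'|<2R\}$, a region of measure of order $\lambda R^{N-1}$, so after dividing by $R$ (or $R^2$ after Young's inequality) they grow like $R^{N-2}$ (resp. $R^{N-3}$) and do not vanish for $N\ge 3$. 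Handling them requires the doubling/measure-growth argument (e.g. \cite{FMS}-Lemma 2.1) or the quantitative results of \cite{FMRS} and \cite{FMS PISA}, which is exactly what the paper invokes; the paper moreover avoids a narrow-strip starting argument altogether, obtaining the start of the plane from Theorem \ref{monotone}, i.e. a blow-up of $u$ at boundary points combined with the classification of solutions to $-\Delta_p u=u^{-\gamma}$ in the half space (\cite{EsSc}), which forces the rescaled limit to be $Cx_N^\beta$ and contradicts a negative derivative.

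Second, your continuation at $\lambda^*$ rests on upgrading $u\le u_{\lambda^*}$ to a \emph{uniform} strict inequality on compact sets away from $Z_u$ and excluding the equality alternative only "in the component touching the boundary". This breaks down twice: (a) the region where you need a uniform gap is unbounded in $x'$, and the gap can degenerate as $|x'|\to\infty$ --- this is the loss of compactness the paper repairs with Propositions \ref{lellalemma} and \ref{lellalemmacaz}, via translated functions $\tilde u_n(x',y)=u(x'+x_n',y)$, $C^{1,\alpha}_{loc}$ compactness and a strong comparison principle applied to the limit; and (b) for $1<p<2$ the strong comparison principle may fail across $Z_u$, so one must rule out interior local symmetry components $\mathcal U\subset\Sigma_{\lambda}\setminus Z_u$ with $u\equiv u_\lambda$ that do not touch the boundary --- boundary values alone cannot exclude them. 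The paper does this in Proposition \ref{trittofritto} by a separate test-function argument using the positivity $1/u^\gamma+f(u)\ge\sigma^+$ on $\mathcal U$, the Damascelli--Sciunzi integrability of $|\nabla u|^{p-2-\eta}\|D^2u\|^2$, and again the measure-growth lemma. You correctly flag this as "the most delicate step", but the proposal supplies no mechanism for it, and the final absorption giving $u\le u_{\lambda^*+\varepsilon}$ still needs the weighted Poincar\'e-type statements of \cite{FMRS} (for $1<p<2$) and \cite{FMS PISA} (for $p>2$) in unbounded strips, which require exactly the support localization that those compactness propositions provide. So the proposal is a correct outline of the classical bounded-domain scheme, but the half-space-specific ingredients that make the argument close are missing.
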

 
 The paper is organized as follows: in Section 2 we carry out the study of the behaviour and the monotonicity of the solutions near the boundary; this is nothing but a barrier argument. In Section 3 we prove Theorem \ref{mainthmczz}. Here we exploit the a refined version of the moving plane technique, taking care of the lack of compactness. 
 
\section{Behaviour and monotonicity of the solutions near to the boundary}

The arguments of this section are well understood in the semilinear case and go back to \cite{DP,GUI,LM}. Our proofs are self contained and, as the reader will guess, exploit some typical barriers arguments. 

Let us recall that here, for a given $x\in\mathbb{R}^N_+$, we will use the notation $x=(x',x_N)$ where $x'\in\mathbb{R}^{n-1}$.  Moreover we will use the following well known inequality (see \cite{Da}),
\begin{equation}\label{eq:lucio}
(|\eta|^{p-2}\eta-|\eta'|^{p-2}\eta',\eta - \eta') \geq C_p (|\eta|+|\eta'|)^{p-2}|\eta-\eta'|^2,
\end{equation}
for every $\eta,\eta' \in \mathbb{R}^N\setminus\{0\}$.\ \\ \ \\
We begin studying the monotonicity of our solutions near the boundary.
The desired estimates shall follow 
studying the behavior of the solutions to the auxiliary problem:
\begin{equation}\label{MPC}
\begin{cases}
\displaystyle -\Delta_p u=\frac{C}{u^\gamma} & \text{in} \,\, \mathbb{R}^N_+,\\
u>0 & \text{in} \,\,  \mathbb{R}^N_+,\\
u=0 & \text{on} \,\, \partial \mathbb{R}^N_+,
\end{cases}
\end{equation}
for a suitable choice of $C$ (due to the assumptions {\bf (hp)}).

In the next lemma, we establish bounds for supersolutions and subsolutions of problems \eqref{MPC}. As a rule, for solutions to \eqref{MPC}, we obtain continuity up to the boundary. Before proceeding with this, and for the sake of completeness, we recall the following:
\begin{defn} We say that
$u$ is a weak supersolution to \eqref{MPC} if   $u\in W^{1,p}_{loc}(\mathbb{R}^N_+)$, ${\rm{ess inf}}_K\, u>0$
	for any compact $K\subset \mathbb R^N_+$ and 
\begin{equation}\label{w-supersol}
	\int_{\Rn}|\nabla u|^{p-2}(\nabla u,\nabla \varphi)\geq C\int_{\Rn}\frac{\varphi}{u^\gamma}\qquad \forall \varphi\in C^1_c(\mathbb{R}_+^N),  \varphi\geq 0.
\end{equation}
We say that $u$ is a weak subsolution to \eqref{MPC} if $u\in W^{1,p}_{loc}(\mathbb{R}^N_+)$, ${\rm{ess inf}}_K\, u>0$
	for any compact $K\subset \mathbb R^N_+$ and 
\begin{equation}\label{w-subsol}
	\int_{\Rn}|\nabla u|^{p-2}(\nabla u,\nabla \varphi)\leq C\int_{\Rn}\frac{\varphi}{u^\gamma}\qquad \forall \varphi\in C^1_c(\mathbb{R}_+^N),  \varphi\geq 0.
\end{equation}
\end{defn}

\begin{lem}\label{continuity}
Let $u\in W^{1,p}_{loc}(\mathbb{R}^N_+)$ be a supersolution to the problem \eqref{MPC}.
Then there exists $c_s>0$ such that:
 \begin{equation}\label{controllo1}
  u(x)\geq c_s x_N^\beta,\qquad \displaystyle \beta=\frac{p}{\gamma+p-1}\,.
 \end{equation}
If  $u\in W^{1,p}_{loc}(\mathbb{R}^N_+)$ is a subsolution to the problem \eqref{MPC} then there exists $\bar{\lambda}>0$  and $C_S>0$ such that
\begin{equation}\label{controllo2}
  u(x)\leq C_Sx_N^\beta.
 \end{equation}
 for all $x\in\Sigma_{\bar{\lambda}}$.\\
Moreover as consequence,  if  $u$ is a solution to \eqref{MPC}, $u\in C^0(\overline{\mathbb{R}^N_+})$.
\end{lem}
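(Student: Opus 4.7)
The plan is to compare $u$ with the explicit profile $V(x):= c\,x_N^{\beta}$, whose exponent $\beta = p/(\gamma+p-1)$ is chosen precisely so that $-\Delta_p V$ and $C/V^{\gamma}$ scale with the same power of $x_N$. A direct computation, using that $\beta<1$ (which is equivalent to the standing assumption $\gamma>1$), gives
\[
-\Delta_p V \;=\; \beta^{p-1}(1-\beta)(p-1)\,c^{p-1}\,x_N^{-\gamma\beta}, \qquad \frac{C}{V^{\gamma}} \;=\; C\,c^{-\gamma}\,x_N^{-\gamma\beta}.
\]
Hence $V$ is a classical subsolution of \eqref{MPC} whenever $c^{p-1+\gamma}$ is small enough, and a classical supersolution in the reverse regime. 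These two roles produce \eqref{controllo1} and \eqref{controllo2} respectively.

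For \eqref{controllo1}, fix an arbitrary $x_0'\in\mathbb{R}^{N-1}$ and work in the half-cylinder $Q_R := B'_R(x_0')\times(0,R)$, where $B'_R$ is a ball in $\mathbb{R}^{N-1}$. On the lateral and upper faces of $\partial Q_R$ the supersolution $u$ admits a strictly positive essential infimum (by the ess-inf requirement in the definition of supersolution), while on the flat face $V=0\leq u$ in the weak Dirichlet sense recalled before the lemma. Pick $c=c_s$ small enough so that $V$ is a subsolution of \eqref{MPC} and at the same time $V\leq u$ on $\partial Q_R\setminus\{x_N=0\}$. Subtracting the two distributional inequalities and testing (after a standard approximation) against $(V-u)^{+}\in W^{1,p}_0(Q_R)$, the left-hand side is controlled from below using the monotonicity inequality \eqref{eq:lucio}, while the right-hand side $C\int(V^{-\gamma}-u^{-\gamma})(V-u)^{+}$ is non-positive because $t\mapsto t^{-\gamma}$ is decreasing. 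This forces $V\leq u$ in $Q_R$; translation invariance of \eqref{MPC} in the $x'$ direction makes $c_s$ independent of $x_0'$, giving \eqref{controllo1}.

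The symmetric argument delivers \eqref{controllo2}. In the strip $\Sigma_{\bar\lambda}$, hypothesis (hp)(iii) (or the analogous local boundedness for subsolutions) supplies $u\leq\tilde C(\bar\lambda)$. Inside cylinders $B'_R(x_0')\times(0,\bar\lambda)$ one compares $u$ to the supersolution $V=C_S x_N^{\beta}$, where $C_S$ is taken so large that $C_S^{p-1+\gamma}$ lies above the threshold of the first paragraph and $C_S\bar\lambda^{\beta}\geq\tilde C(\bar\lambda)$, the latter forcing $V\geq u$ on the ceiling $\{x_N=\bar\lambda\}$. Weak comparison with test function $(u-V)^{+}$ then yields $u\leq V$ in the cylinder, and translation invariance again removes the dependence on $x_0'$. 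Applied to a solution of \eqref{MPC}, \eqref{controllo2} forces $u(x)\to 0$ uniformly as $x_N\to 0^{+}$; combined with the interior $C^{1,\alpha}$ regularity of \cite{Di,Li,T}, this gives $u\in C^{0}(\overline{\mathbb{R}^N_+})$.

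The hard part is twofold. First, the non-compactness of $\mathbb{R}^N_+$ prevents a direct global comparison argument, so the whole reasoning has to be staged on bounded cylinders with carefully controlled lateral data and then promoted to a uniform estimate via translation invariance in $x'$. Second, the admissibility of $(V-u)^{+}$ (resp.\ $(u-V)^{+}$) as a test function is delicate: the singular term $\varphi/u^{\gamma}$ need not be integrable on $\{V>u\}$ near $\{x_N=0\}$. This is handled by a standard regularization, either replacing $u^{-\gamma}$ by $(u+\varepsilon)^{-\gamma}$ or testing with $(V-(u+\varepsilon))^{+}$, and passing to the limit $\varepsilon\to 0$ using \eqref{eq:lucio} and the monotonicity of $t^{-\gamma}$ to preserve the one-sided estimates.
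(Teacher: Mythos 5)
Your computation of $-\Delta_p V$ for $V=c\,x_N^{\beta}$ is correct and it identifies the right scaling exponent, but the comparison argument has a genuine gap: you cannot close the comparison on the lateral boundary of the cylinder $Q_R=B'_R(x_0')\times(0,R)$. You assert that the lateral faces admit a strictly positive essential infimum for $u$ ``by the ess-inf requirement in the definition of supersolution,'' but that requirement holds only for compact subsets of $\mathbb{R}^N_+$, whereas $\partial B'_R(x_0')\times(0,R)$ is \emph{not} compactly contained in the half space (it reaches $\{x_N=0\}$). Near the bottom of the lateral face both $V$ and $u$ tend to zero, and ``$V\leq u$ on $\partial Q_R\setminus\{x_N=0\}$'' is precisely the inequality one is trying to prove there; shrinking $c$ does not help, because without a priori control of the decay rate of $u$ (which is \eqref{controllo1} itself) there is no $c>0$ that guarantees $c\,x_N^\beta\leq u$ for all small $x_N$. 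The same obstruction appears for the upper bound: on $\partial B'_R\times(0,\bar\lambda)$, the pointwise value of $u$ is not controlled by $C_S x_N^\beta$ for small $x_N$ without precisely \eqref{controllo2}, and the weak Dirichlet condition only gives zero trace on the flat face $\{x_N=0\}$, not on the lateral one. Your remark that the delicate point is the integrability of $\varphi/u^\gamma$ misses the actual obstruction, which is the failure of $(V-u-\varepsilon)^+$ (resp.\ $(u-V-\varepsilon)^+$) to vanish on the lateral boundary.

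The paper resolves this by choosing comparison domains and comparison profiles so that the barrier vanishes on \emph{the entire boundary} of the comparison domain, making the test function automatically compactly supported. For the lower bound it uses the ball $B_{x_{0,N}}(x_0)$ tangent to $\{x_N=0\}$ and the function $w_\beta=\tilde C\,\phi_1^\beta$ (with $\phi_1$ the first $p$-Laplacian eigenfunction on the unit ball), rescaled and translated; since $\phi_1$ vanishes on $\partial B_1$, the test function $(w_{\beta,x_0}-u-\varepsilon)^+$ is compactly supported inside $B_{x_{0,N}}(x_0)\cap\mathbb{R}^N_+$ regardless of the unknown decay of $u$. Evaluating at the center and using the Hopf boundary growth of $\phi_1$ then gives $u(x_0)\geq c_s x_{0,N}^\beta$. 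For the upper bound it uses the annular radial solution of $-\Delta_p w=C/w^\gamma$ (existence and uniqueness from \cite{CaScTr}) on $B_{R_3}(x_0)\setminus\overline{B_{R_1}(x_0)}$ with center $x_0$ placed \emph{below} the hyperplane, so that the relevant comparison domain is a spherical cap whose boundary is a piece of the sphere (where $w_k>u$ is arranged) plus a flat piece of $\{x_N=0\}$ (handled by the weak Dirichlet condition), again avoiding any lateral face. To repair your argument you would need to replace the cylinder $Q_R$ with one of these geometries (or construct a subsolution/supersolution vanishing on the whole boundary of your comparison domain); the profile $c\,x_N^\beta$ alone cannot serve as the barrier because it does not vanish on the lateral boundary of any cylinder.
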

\begin{rem}
Let us point out that thanks to Lemma \ref{continuity}, by standard regularity theory, recalling that the solutions are locally bounded and locally bounded away from zero in the interior of the half space,  we deduce that $u\in C^{1,\alpha}_{loc}(\mathbb{R}^N_+)$.
\end{rem}
\begin{proof}
\noindent \textit{We first prove  \eqref{controllo1}}. \\

\noindent Let  $\phi_1$ be the first positive eigenfunction of the $p$-Laplacian operator with zero Dirichlet boundary data on the unit ball $B_1(0)$, corresponding to the first eigenvalue $\lambda_1$.  Recall that $\phi_1\in C^{1}(\overline{B_1(0)})$ and it is radially symmetric.  
Define
$$
 w_\beta:=\tilde{C}\phi_1^\beta, \qquad \tilde{C}>0.
$$
Denoting by
\begin{equation}\label{eta}
\eta(x):=\frac{\tilde{C}^{p-1+\gamma}}{C}\beta^{p-1}\left[\lambda_1\phi_1^p+(1-\beta)(p-1)|\nabla \phi_1|^p\right],
\end{equation}
it is straightforward  to verify that
$$
  -\Delta_pw_\beta=\frac{C\eta(x)}{w_\beta^\gamma}, \qquad \mbox{ on }B_1(0),
$$
where we implicitly take into account that, by Hopf Lemma, $\nabla \phi_1\neq 0$ on the boundary of $B_1(0)$.
Choosing $\tilde{C}$ small enough,  we can assume $\eta(x)<1$ hence
$$
 -\Delta_pw_\beta<\frac{C}{w_\beta^\gamma}, \qquad \mbox{ on }B_1(0).
$$
Let us consider $x_0=(x_0',x_{0,N})\in \mathbb{R}_+^N$ and let 
$$
 w_{\beta,x_0}=x_{0,N}^\beta w_{\beta}\left(\frac{x-x_0}{x_{0,N}}\right),
$$
for all $x\in B_{x_{0,N}}(x_0)$. Thus $w_{\beta,x_0}$ weakly solves
$$
  -\Delta_pw_{\beta,x_0}\leq \frac{C}{w_{\beta,x_0}^\gamma}.
$$
For a given $u$ supersolution of  \eqref{MPC} and $\varepsilon>0$,  let $\varphi=(w_{\beta,x_0}-u-\varepsilon)^+$; this is a suitable test function since $supp(w_{\beta,x_0}-\varepsilon)^+\subset\subset B_{x_{0,N}}(x_0)$ and $u\geq w_{\beta,x_0}$ on $\partial B_{x_{0,N}}$.  In this way $supp(w_{\beta,x_0}-u-\varepsilon)^+\subset\subset B_{x_{0,N}}(x_0)$.
Then,  exploiting also \eqref{eq:lucio}, we deduce that:
\begin{eqnarray*}
	&&C_p\int_{B_{x_{0,N}}}(|\nabla w_{\beta,x_0}|+|\nabla u|)^{p-2}|\nabla (w_{\beta,x_0}-u-\varepsilon)^+|^2\\
	&\leq&\int_{B_{x_{0,N}}}(|\nabla w_{\beta,x_0}|^{p-2}\nabla w_{\beta,x_0}-|\nabla u|^{p-2}\nabla u,\nabla(w_{\beta,x_0}-u-\varepsilon)^+)\\
	&=&C \int_{B_{x_{0,N}}}\left(\frac{1}{w_{\beta,x_0}^\gamma}-\frac{1}{u^\gamma}\right) (w_{\beta,x_0}-u-\varepsilon)^+\leq 0\,,
\end{eqnarray*}
hence
\[
\int_{B_{x_{0,N}}}(|\nabla w_{\beta,x_0}|+|\nabla w_k|)^{p-2}|\nabla (w_{\beta,x_0}-u-\varepsilon)^+|^2=0.
\]
This permits to get that
\begin{equation}\label{p1}
 w_{\beta,x_0}\leq u+\varepsilon, \qquad \mbox{ in }B_{x_{0,N}}(x_0),  \quad \forall \varepsilon>0,
\end{equation}
then
$$
 x_{0,N}^\beta \tilde{C}\phi_1^{\beta}\left(\frac{x-x_0}{x_{0,N}}\right)=x_{0,N}^\beta w_{\beta}\left(\frac{x-x_0}{x_{0,N}}\right)\leq u(x)+\varepsilon.
$$
The thesis \eqref{controllo1} follows letting $\varepsilon\to 0$. We have
$$
 c_sx_{0,N}^\beta :=x_{0,N}^\beta \tilde{C}\phi_1^{\beta}\left(0\right)\leq u(x_0).
$$
 Finally the complete claim of \eqref{controllo1} (namely all over the half space), follows by the scale invariance of the equation, namely by the arbitrariness of $x_0$.\\

 \noindent \textit{Next we are going to prove  \eqref{controllo2}.}\\

\noindent Let  $R_1>0$. Let $x_0=(0',t)$ be such that $t<0$ and $|t|>R_1$. Let $R_3>0$ be such that $R_3>>|t|$. Define the annular domain $\Omega_t:=B_{R_3}(x_0)\setminus \overline{B_{R_1}(x_0)}$.
 
 \noindent By \cite{CaScTr}-Theorem 1.3 and 1.5, there exists a unique radial solution $w\in W^{1,p}_{loc}(\Omega_t)$ to
 \begin{equation}\label{aux}
\begin{cases}
\displaystyle -\Delta_p w=\frac{C}{w^\gamma}& \text{in} \,\, \Omega_t\\
w>0 & \text{in} \,\,  \Omega_t\\
w=0 & \text{on} \,\, \partial \Omega_t.
\end{cases}
\end{equation}
Let $R_2>0$ such that
\begin{equation}\label{eq:funztest0}\max\left\{|t|,\frac{(R_1+3R_3)}{4}\right\}<R_2<R_3,\end{equation} and $\tilde{\Omega}_t:= (B_{R_2}(x_0)\setminus \overline{B_{R_1}(x_0)})\cap\mathbb{R}_N^+$,  see Figure \ref{fig}.
\begin{figure}[h]
\begin{center}
\includegraphics[height=.42\textwidth,width=1.1\textwidth]{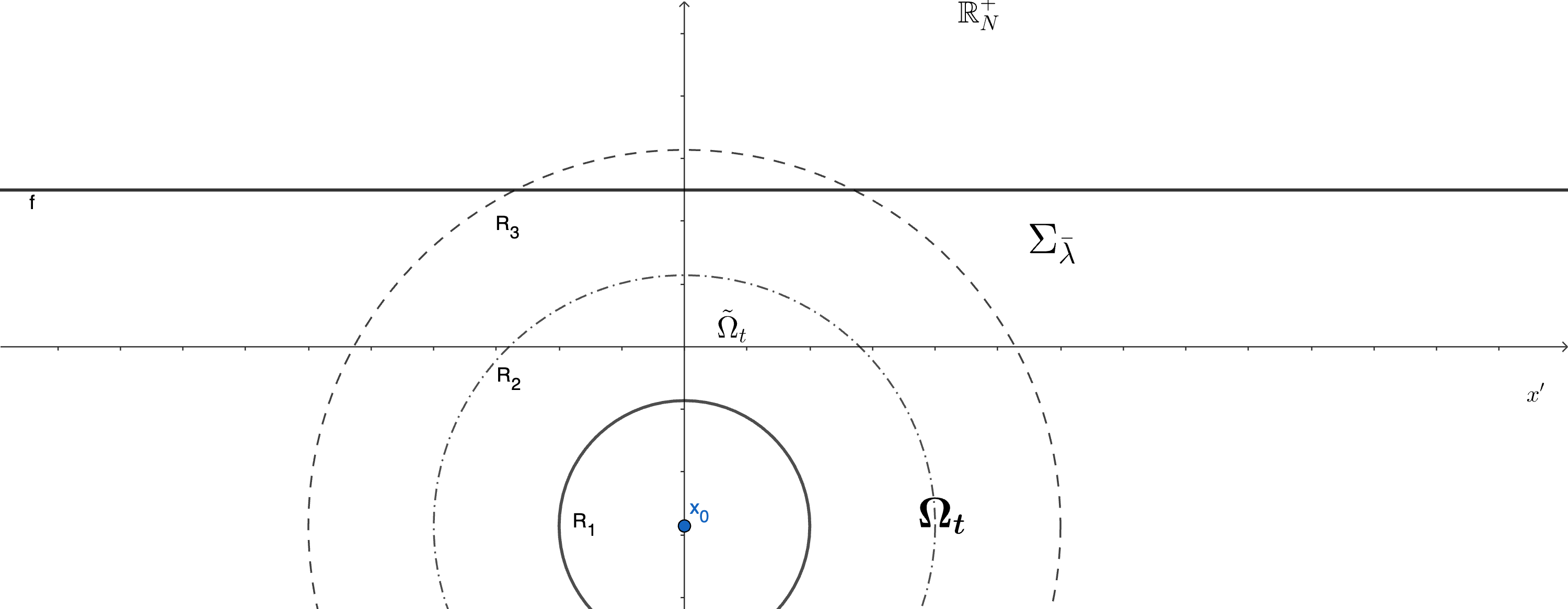}
\caption{}\label{fig}
\end{center}
\end{figure}
\noindent Denoting by $M_u$ the upper-bound of $u$ on $\Sigma_{\bar{\lambda}}$ (for a suitable $\bar{\lambda}$),  let
\begin{equation*}
k>\max\left\{ \frac{M_u}{ \displaystyle  \min_{\partial B_{R_2}(x_0)}w},1\right\}
\end{equation*}
and $w_k:=kw$. So that
\begin{equation}\label{eq:funztest1}
 w_k\geq k \min_{\partial B_{R_2}(x_0)}w>M_u\geq u, \qquad \mbox{ on }\partial B_{R_2}(x_0).
 \end{equation}
 The positivity of $w_k$ implies that $w_k>u$ on $\partial{\tilde{\Omega}_t}$.  Let us define $\varphi=(u-\varepsilon-w_k)^+\chi_{\mathbb{R}_+^N}$.

\

{\it Claim: } We show that  $\varphi\in W_0^{1,p}(\tilde{\Omega}_t)$.

\

\noindent Exploiting the fact that $w_k$ is continuous,  radial and radially decreasing in the annulus $B_{R_3}(x_0)\setminus B_{R_2}(x_0)$, thanks to  \eqref{eq:funztest0} and \eqref{eq:funztest1},  there exists an open tubular neighborhood of the sphere of radius $R_2$, say $\mathcal T_{R_2}$, such that 
\begin{equation}\label{eq:funztest2}(u-\varepsilon-w_k)^+=0 \quad\text{in}\,\,\mathcal T_{R_2}\cap {\overline{\tilde{\Omega}}_t}.\end{equation}
Let $\bar{\varphi}\in C_c^1(\mathbb{R}^N)$, $0\leq \bar{\varphi}\leq 1$ be such that $\bar{\varphi}=1$ in  ${\overline{\tilde{\Omega}}_t}\setminus \mathcal T_{R_2}$ and $\bar{\varphi}=0$ in $\mathbb{R}^N_+\setminus \tilde{\Omega}_t$. Let us denote by $\sigma_\varepsilon:=(u-\varepsilon)^+\bar{\varphi}\chi_{\mathbb{R}_+^N}$: using Dirichlet condition in \eqref{aux} (see Definition \ref{w-sol}),   $\sigma_\varepsilon\in W_0^{1,p}(\mathbb{R}_+^N)$, actually $\sigma_\varepsilon\in W_0^{1,p}(\tilde \Omega_t)$.  Let $(\phi_n)_n\subset C_c^1(\tilde \Omega_t)$ such that $\phi_n\to \sigma_\varepsilon$ in $W_0^{1,p}(\tilde \Omega_t)$. Taking a  subsequence we can suppose that $\phi_n\to (u-\varepsilon)^+\bar{\varphi}$ a.e. on $\tilde{\Omega}_t$.
 
\noindent Now let us set   $\varphi_n=(\phi_n-w_k)^+\chi_{\tilde{\Omega}_t}$. Since  $\phi_n\in C_c^1(\tilde \Omega_t)$, then $\varphi_n\in W^{1,p}_0(\tilde \Omega_t)$ and $\sup_n\|\varphi_n\|_{W^{1,p}_0(\tilde \Omega_t)}< +\infty$. Then, up to subsequences,  $\varphi_n\rightharpoonup \varphi\in W_0^{1,p}(\tilde{\Omega}_t)$ and $\varphi_n\to ((u-\varepsilon)^+\bar{\varphi}-w_k)^+$ a.e. in $\tilde{\Omega}_t$.   Hence, by \eqref{eq:funztest1},  $(u-\varepsilon-w_k)^+$ and the claim follows.

Consequently  there exists a sequence of positive functions $(\varphi_n)_{n\in\mathbb{N}}\subset C^1_c(\tilde{\Omega}_t)$ converging to $(u-\varepsilon-w_k)^+$ in $W^{1,p}_0(\tilde{\Omega}_t)$.  We set
\[
\psi_n\,:=\,\min\{(u-\varepsilon-w_k)^+,\varphi_n^+\}
\]
so that the support of $\psi_n$ is strictly contained in $\tilde{\Omega}_t$ and in the region where $u\geq w_k$. Plugging by density $\psi_n$ as test function in the problem,  we deduce that
\begin{eqnarray*}
	&&\int_{\tilde{\Omega}_t}(|\nabla u|^{p-2}\nabla u-|\nabla w_k|^{p-2}\nabla w_k,\nabla\psi_n)\\
	&\leq&C\int_{\tilde{\Omega}_t}\left(\frac{1}{u^\gamma}-\frac{1}{w_k^\gamma}\right) \psi_n.
\end{eqnarray*}

We can therefore pass to the limit and using \eqref{eq:lucio} we get
\begin{eqnarray*}
	&&C_p\int_{\tilde{\Omega}_t}(|\nabla u|+|\nabla w_k|)^{p-2}|\nabla (u-\varepsilon-w_k)^+|^2\\
	&\leq&\int_{\tilde{\Omega}_t}(|\nabla u|^{p-2}\nabla u-|\nabla w_k|^{p-2}\nabla w_k,\nabla(u-\varepsilon-w_k)^+)\\
	&=&C \int_{\tilde{\Omega}_t}\left(\frac{1}{u^\gamma}-\frac{1}{w_k^\gamma}\right) (u-\varepsilon-w_k)^+\leq 0\,,
\end{eqnarray*}
hence
\[
\int_{\tilde{\Omega}_t}(|\nabla u|+|\nabla w_k|)^{p-2}|\nabla (u-\varepsilon-w_k)^+|^2=0
\]
thus proving that $u \leq w_k+\varepsilon$ in $\tilde{\Omega}_t$. The fact that $\varepsilon$ is arbitrarily 
chosen implies that $u \leq w_k$ in $\tilde{\Omega}_t$ for any $|t|>R_1$. \\
By continuity, it follows now that
\[
u \leq w_k\quad \text{in} \,\,\tilde{\Omega}_{R_1}\,.
\]
We now consider the fact that $w_k\sim x_N^\beta$ in the annular region, as e.g. can be deduced studying the ODE radial equation basing on l'H\^opital Theorem or by \cite{LM}. This and a translation argument shows that  there exist $C_S>0$ and $\bar{\lambda}$ such that
$$
 u(x)\leq C_Sx_N^\beta.
$$
in  $\Sigma_{\bar{\lambda}}$.\\

\noindent Finally, collecting both inequalities we conclude, that each solution to \eqref{MPC}, $u$, is continuous up to the boundary.
\end{proof}
We are now ready to prove that any solution is monotone increasing near the boundary, namely:
\begin{thm}\label{monotone}
Let $u\in C^{1,\alpha}_{loc}(\mathbb R^N_+)$ be a solution to \eqref{MP}. Then there exists $\bar \lambda$ such that $u$ is monotone increasing in $\Sigma_{\bar\lambda}$
\end{thm}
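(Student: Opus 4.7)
The plan is to apply the moving plane method in thin strips near the boundary, combined with the sharp two-sided power-type estimate provided by Lemma \ref{continuity}. First, I would verify that Lemma \ref{continuity} applies to $u$ itself. By hypotheses {\bf (hp)(ii)--(iii)}, $|f(u)|$ is uniformly bounded on $\Sigma_{\bar\lambda}$; since $u(x)\to 0$ as $x_N\to 0^+$, the singular term dominates, so that $\tfrac{1}{u^\gamma}+f(u)\sim \tfrac{1}{u^\gamma}$ near the boundary. Choosing $\bar\lambda$ small enough, $u$ is both a super- and a sub-solution of \eqref{MPC} for suitable constants, and the two-sided estimate $c_s x_N^\beta\le u(x)\le C_S x_N^\beta$ follows on $\Sigma_{\bar\lambda}$.

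For $0<\lambda\le\bar\lambda/2$, introduce the reflected function $u_\lambda(x',x_N):=u(x',2\lambda-x_N)$, which solves \eqref{MP} on $\Sigma_{2\lambda}$. Monotonicity in $x_N$ on $\Sigma_{\bar\lambda/2}$ is equivalent to $u\le u_\lambda$ in $\Sigma_\lambda$ for all such $\lambda$. To prove this via weak comparison, set $w_\varepsilon:=(u-u_\lambda-\varepsilon)^+$. By Lemma \ref{continuity}, $u_\lambda\ge c_s\lambda^\beta$ on $\Sigma_\lambda$ while $u\le C_S x_N^\beta$, so $\{w_\varepsilon>0\}$ is kept at positive distance from $\{x_N=0\}$ for any fixed $\varepsilon>0$; reflection symmetry gives $w_\varepsilon=0$ on $\{x_N=\lambda\}$. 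An approximation analogous to the one carried out in the proof of Lemma \ref{continuity} then allows $w_\varepsilon$ (multiplied by a tangential cutoff) to be used as a test function.

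Subtracting the weak formulations of \eqref{MP} for $u$ and $u_\lambda$, testing with $w_\varepsilon$, and using \eqref{eq:lucio} on the left-hand side yields
\[
C_p\int_{\Sigma_\lambda}(|\nabla u|+|\nabla u_\lambda|)^{p-2}|\nabla w_\varepsilon|^2\,\le\,\int_{\Sigma_\lambda}\left[\Bigl(\frac{1}{u^\gamma}-\frac{1}{u_\lambda^\gamma}\Bigr)+\bigl(f(u)-f(u_\lambda)\bigr)\right]w_\varepsilon.
\]
On $\{w_\varepsilon>0\}$ one has $u>u_\lambda$, so the singular contribution is non-positive and may be discarded, while the Lipschitz continuity of $f$ on $[0,\tilde C(\bar\lambda)]$ bounds the remaining integral by $L\int_{\Sigma_\lambda}(w_\varepsilon^2+\varepsilon\, w_\varepsilon)$. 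Combining this with locally uniform gradient bounds for $u$ and $u_\lambda$ on the support of $w_\varepsilon$ (which follows from $C^{1,\alpha}_{loc}$ regularity together with the lower barrier, pushing the support away from $\{x_N=0\}$) and a Poincar\'e-type inequality in the thin strip $\Sigma_\lambda$, the right-hand side is absorbed into the left-hand side for $\lambda$ sufficiently small. This forces $w_\varepsilon\equiv 0$, and letting $\varepsilon\to 0$ yields $u\le u_\lambda$ in $\Sigma_\lambda$.

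The principal obstacle is the admissibility of $w_\varepsilon$ as a test function: since $u\notin W^{1,p}(\mathbb R^N_+)$ up to the boundary, the regularisation by $\varepsilon>0$ combined with the lower barrier $u_\lambda\ge c_s\lambda^\beta$ is crucial to keep the support of $w_\varepsilon$ uniformly away from $\partial\mathbb R^N_+$ before passing to the limit. A secondary difficulty, proper to the quasilinear framework $p\neq 2$, is handling the degenerate/singular weight $(|\nabla u|+|\nabla u_\lambda|)^{p-2}$, which demands locally uniform two-sided gradient estimates and a careful weighted Sobolev--Poincar\'e inequality on $\Sigma_\lambda$.
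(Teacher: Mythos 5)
Your strategy (a direct moving--plane/weak--comparison argument in thin strips, after checking that $u$ enjoys the two--sided bound of Lemma \ref{continuity}) is genuinely different from the paper's proof, which is a blow--up argument: at points $P_n$ approaching $\partial\mathbb{R}^N_+$ where $\partial_{x_N}u<0$, the paper rescales $\hat u_n(x)=\delta_n^{-\beta}u(\delta_nx'+x_n',\delta_nx_N)$, uses \eqref{controllo1}--\eqref{controllo2} and uniform interior $C^{1,\alpha}$ estimates to pass to a limit solving $-\Delta_p\hat u=\hat u^{-\gamma}$ in $\mathbb{R}^N_+$ with $c_sx_N^\beta\le\hat u\le C_Sx_N^\beta$, and then invokes the classification theorem of \cite{EsSc} to conclude $\hat u=Cx_N^\beta$, contradicting $\partial_{x_N}\hat u(0,1)\le0$. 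Unfortunately, as written your alternative has a genuine gap at its central step, precisely at the point you flag as a ``secondary difficulty''.

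The absorption of the right--hand side by ``a Poincar\'e-type inequality in the thin strip'' is the hard part of the quasilinear problem and is not supplied. For $p>2$ the weight $(|\nabla u|+|\nabla u_\lambda|)^{p-2}$ vanishes wherever both gradients vanish, and nothing excludes critical points at height comparable to $\lambda$: the barriers $c_sx_N^\beta\le u\le C_Sx_N^\beta$ give no pointwise lower bound on $|\nabla u|$ away from $\{x_N=0\}$. A weighted Sobolev--Poincar\'e inequality with this weight requires quantitative control of the critical set (e.g.\ summability of $|\nabla u|^{-(p-2)}$ as in \cite{DS}), which is not available uniformly in an unbounded strip; this is exactly why the specialized comparison principles in narrow unbounded domains (\cite{FMRS} for $1<p<2$, \cite{FMS PISA} for $p>2$) exist, and why the present paper must first ``recover compactness'' (Propositions \ref{trittofritto}--\ref{lellalemmacaz}) before applying them in Section 3. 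A second unresolved point is the unboundedness in $x'$: after inserting the cutoff $\varphi_R$, the error term is of order $R^{-1}\,|\,\Sigma_\lambda\cap\{B_{2R}'\setminus B_R'\}\times(0,\lambda)\,|\sim R^{N-2}$, which does not vanish for $N\ge3$; one must instead run a doubling/iteration scheme as in \cite{FMS}, Lemma 2.1, which your sketch does not set up. (A minor, fixable circularity: you justify the supersolution property for \eqref{MPC} by ``$u(x)\to0$ as $x_N\to0^+$'', which is not known a priori but is part of what Lemma \ref{continuity} delivers; the correct order is to get the subsolution property from boundedness in strips, hence the upper bound, hence smallness of $u$ near the boundary, hence the supersolution property.) For $1<p<2$ your scheme is close to workable, since on $\mathrm{supp}\,w_\varepsilon$, which lies at distance of order $\lambda$ from the boundary, the gradients are bounded and the weight is bounded below; but for $p>2$ the degeneracy is a genuine obstruction, and it is precisely the case that the paper's blow--up/classification proof handles uniformly in $p$.
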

\begin{proof}
By contradiction let us assume that there exists a sequence of points $(P_n)_{n\in\mathbb{N}}\subset\mathbb{R}^+_N$, $P_n:=(x_n', x_{n,N})$ such that 
\begin{equation}\label{eq:pr1}
\text{dist}(P_n, \partial \mathbb R^+_N) \rightarrow 0\qquad\text{and}\qquad\frac{\partial u}{\partial x_N}(P_n) <0. 
\end{equation}
Let us define 
\[\hat u_n(x)=\delta_n^{-\beta} u_n (\delta_n x'+x'_n,\delta_nx_N),\]
where $\delta_n=x_{n, N}$ and $\displaystyle \beta=\frac{p}{p+\gamma-1}$.
It is not difficult to see that 
\begin{equation}\begin{cases}\label{eq:pr5}
\displaystyle -\Delta_p \hat u_n=\frac{1}{\hat u_n^\gamma}+\delta_n^{\gamma\beta}f(\hat u_n) & \text{in} \,\, \mathbb{R}^N_+\\
\hat u_n>0 & \text{in} \,\,  \mathbb{R}^N_+\\
\hat u_n=0 & \text{on} \,\, \partial \mathbb{R}^N_+.
\end{cases}
\end{equation}
For $n$ big enough,  using Theorem \ref{continuity},  we get that for all $x\in \mathbb R^N_+$
\begin{eqnarray}\label{eq:pr2}
 c_sx_N^\beta\leq \hat u_n(x)=\delta_n^{-\beta}u_n (\delta_n x'+x'_n,\delta_nx_N)\leq C_Sx_N^\beta. 
\end{eqnarray}
In particular  for a given  compact $K\subset \mathbb R^N_+$ we obtain that    
\begin{equation}\label{eq:pr3}
\hat u_n(x)=\delta_n^{-\beta}u_n (\delta_n x'+x'_n,\delta_nx_N)\geq c_sx^{\beta}_N\geq c_sd_K^\beta, \quad x\in K,
\end{equation}
where $d_K:=\text{dist}(K, \partial \mathbb R^N_+)$ and
\begin{equation}\label{eq:pr4}
\hat u_n(x)\leq C(p, \gamma, K), \quad x\in K.
\end{equation}
Collecting equations \eqref{eq:pr5} and \eqref{eq:pr3},   using standard regularity estimates \cite{Di}  for the $p$-laplacian operator, we deduce that 
\begin{equation}\label{eq:pr6}
\|\hat u_n\|_{C^{1,\alpha}_{loc}(\mathbb R^N_+)}\leq C(f,p, \gamma, c_s, C_S, d_K, K).
\end{equation} 
By the Arzela-Ascoli theorem, we pass to the limit in \eqref{eq:pr5}: since by {\bf (hp)}, $f$ is bounded in the set 
$[0, \|\hat u_n\|_{C^1(K)}]$, we get 
\begin{equation}\label{eq:limitprobl}
\begin{cases}
\displaystyle -\Delta_p \hat u=\frac{1}{\hat u^\gamma}& \text{in} \,\, \mathbb{R}^N_+\\
\hat u>0 & \text{in} \,\,  \mathbb{R}^N_+\\
\hat u=0 & \text{on} \,\, \partial \mathbb{R}^N_+.
\end{cases}
\end{equation}
Finally by \eqref{eq:pr2} we recover 
\begin{equation}\label{eq:prr1}
 c_sx_N^\beta\leq \hat u(x)\leq  C_Sx_N^\beta.
\end{equation}
Since \eqref{eq:limitprobl} and \eqref{eq:prr1} are in force, from Theorem 1.2 in \cite{EsSc} the  solutions to \eqref{eq:limitprobl} are  classified. In particular   
\begin{equation}\label{eq:prr2}
\hat u= C x_N^\beta,\end{equation}
for some positive constant $C=C(p,\gamma)$. On the other hand \eqref{eq:pr1} implies that  
\[\frac{\partial \hat u}{\partial x_N}(0,1)\leq 0,\] that is a contradiction with \eqref{eq:prr2}. This ends the proof.
\end{proof}

\section{Proof of Theorem \ref{mainthmczz}}\label{sect.cort}
\noindent In this section we are going to prove our main result.  \\
For a given solution $u$ to \eqref{MP},  we define the reflected function $u_\lambda(x)$ on the strip $\Sigma_{2\lambda}$ by
\begin{equation}\label{riflessa}
u_\lambda(x)=u_\lambda(x',y)\,:=\, u(x',2\lambda-y)\,\quad\text{in}\,\,\, \Sigma_{2\lambda}\,.
\end{equation}
First of all we start with the following preliminary results.

\subsection{Recovering Compacteness: the case $1<p<2$ }\label{sectioncompactbistreremdbvfj}\ \\
\noindent As customary we define  the critical set $Z_u$  by
\[
Z_u=\{x\in \mathbb{R}^N_+ \,:\, \nabla u(x)=0\}.
\]We have the following
\begin{prop}\label{trittofritto}
	Let  $1<p<2$ and let $u \in C^{1,\alpha}_{loc}({\mathbb{R}^N_+})\cap C^{0}(\overline{\mathbb{R}^N_+})$ be a solution  to \eqref{MP} under the assumptions {\bf (hp)}. \\ Suppose    that $u$ is monotone nondecreasing in $\Sigma_\lambda$, for some $ \lambda>0$.\\
	Assume that $\mathcal{U}$ is a  connected component of $\Sigma_\lambda\setminus Z_u$  such that $u(x)\equiv u_{\lambda}(x)$ in $\mathcal{U}$, (i.e. a local symmetry region for $u$).
Then   
$$ \mathcal{U}\equiv\emptyset \,.$$
\end{prop}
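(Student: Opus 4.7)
The plan is to argue by contradiction. Suppose $\mathcal{U}\neq\emptyset$; using the reflection identity $u\equiv u_\lambda$ on $\mathcal{U}$ together with the Dirichlet condition $u|_{\partial \mathbb{R}^N_+}=0$, I aim to produce a contradiction.

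The first step is to localize the critical set near the flat boundary, showing that there exists $\epsilon>0$ with $Z_u\cap\{0<x_N<\epsilon\}=\emptyset$. This can be extracted from Lemma \ref{continuity} together with the blow-up analysis carried out in the proof of Theorem \ref{monotone}: the rescaled solutions $\hat u_n$ converge to the explicit profile $Cx_N^\beta$ from the classification in \cite{EsSc}, whose $x_N$-derivative is strictly positive, so in the original variables $|\nabla u|$ cannot vanish in a sufficiently thin strip adjacent to $\{x_N=0\}$. As a consequence the strip $S:=\Sigma_\lambda\cap\{0<x_N<\epsilon\}$ is a connected open subset of $\Sigma_\lambda\setminus Z_u$ and therefore lies in a single connected component $\mathcal{V}$ of $\Sigma_\lambda\setminus Z_u$. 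This yields a clean dichotomy: either $\mathcal{U}=\mathcal{V}$, in which case $S\subset\mathcal{U}$, or $\mathcal{U}\cap S=\emptyset$ and hence $\partial\mathcal{U}\cap\Sigma_\lambda\subset Z_u$.

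The first alternative is immediate: on $S\subset\mathcal{U}$ the identity $u(x',x_N)=u(x',2\lambda-x_N)$ holds, and letting $x_N\to 0^+$ using the continuity of $u$ up to $\partial \mathbb{R}^N_+$ from Lemma \ref{continuity} gives $0=u(x',0)=u(x',2\lambda)$, contradicting $u>0$ in $\mathbb{R}^N_+$. The second alternative is the delicate part, and the strategy is to show that the symmetry $u\equiv u_\lambda$ propagates across the critical set. Concretely, for $P\in\partial\mathcal{U}\cap\Sigma_\lambda\subset Z_u$ and a small ball $B_r(P)\subset\subset\Sigma_\lambda$, I would test the difference of the weak formulations satisfied by $u$ and $u_\lambda$ against a cut-off of $(u-u_\lambda)^\pm$ supported in $B_r(P)$, and exploit the pointwise inequality \eqref{eq:lucio} together with the local summability of the weight $(|\nabla u|+|\nabla u_\lambda|)^{p-2}$ available in the regime $1<p<2$ through weighted Sobolev/Hardy inequalities, to deduce $u\equiv u_\lambda$ on $B_r(P)$. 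This shows that $\{u=u_\lambda\}\cap\Sigma_\lambda$ is also open; being closed, nonempty and $\Sigma_\lambda$ connected, it must then equal $\Sigma_\lambda$. But then $S\subset\{u=u_\lambda\}$ and we are back in the first alternative, a contradiction. The main obstacle is precisely this propagation step across $Z_u$, which is the technical core of the argument and the reason the proposition is restricted to $1<p<2$.
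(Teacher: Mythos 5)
Your argument splits into two alternatives, and the second one contains the real gap. For a point $P\in\partial\mathcal{U}\cap\Sigma_\lambda\subset Z_u$ you propose to test the difference of the two weak formulations with cut-offs of $(u-u_\lambda)^\pm$ in a small ball $B_r(P)$ and, via \eqref{eq:lucio} and weighted Sobolev/Poincar\'e inequalities, to conclude $u\equiv u_\lambda$ on $B_r(P)$. No mechanism in that sketch produces an identity: energy estimates of this type yield at best a \emph{weak} comparison (a one-sided inequality in a small domain, and even that needs a sign or smallness structure that you do not have, since $f$ is only locally Lipschitz and $(u-u_\lambda)^\pm$ does not vanish on $\partial B_r(P)$). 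What you actually need is a strong comparison/unique continuation principle for the $p$-Laplacian \emph{across} the set where both gradients vanish, and this is precisely what is not available for $p\neq 2$: the known results (e.g.\ \cite{DS2006}) give $u\equiv u_\lambda$ only within connected components of the complement of the critical set, which is exactly the hypothesis you start from. So the ``propagation across $Z_u$'' step, which you yourself identify as the core, is assumed rather than proved, and the restriction $1<p<2$ in the proposition has nothing to do with such a propagation being true in that range. (Your first step, the absence of critical points in a thin strip via the blow-up of Theorem \ref{monotone} and the classification \eqref{eq:prr2}, is plausible but unnecessary: since $u\equiv u_\lambda$ on $\mathcal{U}$, points of $\mathcal{U}$ approaching $\{y=0\}$ already give a contradiction directly from \eqref{controllo1}--\eqref{controllo2} and \eqref{riflessa}, because $u\to 0$ while $u_\lambda$ stays bounded away from zero.)

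The paper avoids any comparison across $Z_u$. After showing $\mathrm{dist}(\mathcal{U},\{y=0\})>0$ as above, it observes that $u^{-\gamma}+f(u)\geq\sigma^+>0$ on $\mathcal{U}$, and then tests the equation with $\Psi=\frac{G_\varepsilon(|\nabla u|)}{|\nabla u|}\chi_{\mathcal{U}}\varphi_R$, which is admissible precisely because $\nabla u=0$ on $\partial\mathcal{U}\cap\Sigma_\lambda$ and $G_\varepsilon$ vanishes near $|\nabla u|=0$. The term involving $\nabla h_\varepsilon(|\nabla u|)$ is killed as $\varepsilon\to 0$ using the weighted Hessian integrability $\int|\nabla u|^{p-2-\eta}\|D^2u\|^2<\infty$ of \cite{DS} (this is where $1<p<2$ and the choice of $\eta$ enter), leaving
$\int_{\mathcal U}|\nabla u|^{p-2}(\nabla u,\nabla\varphi_R)\geq C\int_{\mathcal U}\varphi_R$, hence
$|\mathcal U\cap\{B_R'(0)\times\mathbb R\}|\leq \frac{C}{R}|\mathcal U\cap\{B_{2R}'(0)\times\mathbb R\}|$; the measure-decay Lemma 2.1 of \cite{FMS} then forces $|\mathcal{U}|=0$, i.e.\ $\mathcal{U}=\emptyset$. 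This quantitative ``local symmetry region has zero measure'' argument is exactly the device invented to bypass the missing strong comparison principle at degenerate points, so your route cannot be repaired by the tools you invoke; it would require a genuinely new unique continuation result.
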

\begin{proof}
  \noindent Let us start observing that
  \begin{center}
     $\text{dist}(\mathcal{U},\{y=0\})>\tau>0$.
  \end{center}
By contradiction let us assume that there exists a sequence of points
\[
 x_n=(x'_n,y_n)\in \mathcal U,
\]
such that
 \begin{equation}\label{dgfygikjvb}
 \underset{n\rightarrow \infty}{\lim} \text{dist}(x_n\,,\,\{y=0\})\,=\underset{n\rightarrow \infty}{\lim} y_n=0.
\end{equation}
It follows that $u(x_n)$ approaches zero by \eqref{controllo1}-\eqref{controllo2} while $u_\lambda(x_n)$ (see \eqref{riflessa}) is bounded away from zero by the condition $u\geq C>0$,  see \eqref{controllo1}. This contradiction proves the claim.

\noindent Consequently
\begin{center}
  there exists $\sigma=\sigma(\mathcal{U},\lambda)>0$ such that $u\geq \sigma$ in $\mathcal U$.\end{center}
  Therefore,  that there exists $\sigma^+ >0$ such that
 \begin{equation}\label{fffffff}
  f(u)+\frac{1}{u^\gamma}\geq\sigma^+ \quad \text{in } \mathcal U.
 \end{equation}

We now proceed in order to conclude the proof.
Let $\varphi_R(x',y)=\varphi_R(x')$ with $\varphi_R(x') \in C^{\infty}_c (\mathbb{R}^{N-1}) $  defined as
\begin{equation}\label{Eq:Cut-off1}
 \begin{cases}
  \varphi_R \geq 0, & \text{ in } \mathbb{R}^N_+\\
   \varphi_R \equiv 1, & \text{ in } B_R^{'}(0) \subset \mathbb{R}^{N-1},\\
   \varphi_R \equiv 0, & \text{ in } \mathbb{R}^{N-1} \setminus B_{2R}^{'}(0),\\
\displaystyle |\nabla \varphi_R | \leq \frac CR, & \text{ in } (B_{2R}^{'}(0) \setminus B_R^{'}(0)) \subset  \mathbb{R}^{N-1},
\end{cases}
\end{equation}
where $B_R^{'}(0)=\left\{ x'\in \mathbb{R}^{N-1}: |x'|<R \right\}$, $ R>1$ and $C$ is a suitable positive constant.\\
For all $\varepsilon>0$, let $G_\varepsilon:\mathbb{R}^+\cup\{0\}\rightarrow\mathbb{R}$ be defined as:
	\begin{equation}\label{eq:G}
		G_\varepsilon(t)=\begin{cases} 0, & \text{if  $t\geq \varepsilon$} \\
			2t-2\varepsilon,& \text{if $t\in (\varepsilon, 2\varepsilon)$}
			\\ t, & \text{if $t\geq 2\varepsilon$}.
		\end{cases}
	\end{equation}
	\noindent Let us define
	$$
	\Psi=\Psi_{\varepsilon,R}\,:=\,\displaystyle \frac{G_\varepsilon(|\nabla u|)}{|\nabla u|}\chi_{\mathcal U}\varphi_R,
	$$
	where  $\chi_{\mathcal U}$ is the characteristic function of a set $\mathcal U$. Denoting by 	${\displaystyle h_\varepsilon (t)=\frac{G_\varepsilon(t)}{t}}$ (meaning that $h(t)=0$ for $t\in [0,\varepsilon]$),  using $\Psi$ as a test function, we get
 \begin{eqnarray}\label{VaScOoOoOoO}
   \nonumber &&\int_{\mathcal U}\varphi_{R}|\nabla u|^{p-2}(\nabla u,  \nabla h_\varepsilon(|\nabla u|)) dx
   +\int_{\mathcal U}h_\varepsilon(|\nabla u|)
    |\nabla u|^{p-2}(\nabla u, \nabla \varphi_{R})dx\\
   &\geq&\sigma^+\int_{\mathcal U} \displaystyle \varphi_Rh_\varepsilon(|\nabla u|)dx
 \end{eqnarray}
 Notice that the first integral in the left-hand-side of \eqref{VaScOoOoOoO} can be estimate as
\begin{eqnarray}\label{eq:smm3}
  &&\left| \int_{\mathcal U}\,\varphi_{R}|\nabla u|^{p-2}(\nabla u,  \nabla h_\varepsilon(|\nabla u|))dx\right|\\
  &&\nonumber \leq \int_{\mathcal U}|\nabla u|^{p-2}\Big(|\nabla u|h_\varepsilon'(|\nabla u|)\Big)\|D^2 u\| \varphi_R dx,
\end{eqnarray}
where $\|D^2 u\|$ denotes the Hessian  norm.\\
	
	\noindent Here below, we fix $R>0$ and let $\varepsilon\rightarrow 0$. Later we will let $R\rightarrow \infty$.
	To this aim, let us first show  that
	\begin{itemize}
		\item [$(I)$] $|\nabla u|^{p-2}||D^2 u|| \varphi_R \in L^1(\mathcal U), \,\,\, \forall R>0$;
		
		\
		
		\item [$(II)$] $|\nabla u|h_\varepsilon'(|\nabla u|)\rightarrow 0$ a.e. in $\mathcal U$ as $\varepsilon \rightarrow 0$ and $|\nabla u|h_\varepsilon'(|\nabla u|)\leq C$ with $C$ not depending on $\varepsilon$.
	\end{itemize}
	Let us  prove $(i)$. Defining
	$\mathcal{D}(R)=\left\{ \mathcal{U} \cap {\{B_R^{'}(0)\times \mathbb{R}\}} \right\}$, by H\"older's inequality it follows
	\begin{eqnarray}\label{eq:smm4}
		\nonumber&&\int_{\mathcal U}|\nabla u|^{p-2}||D^2u||\varphi_R dx\leq C(\mathcal{D}(2R))\left( \int_{\mathcal{D}(2R)}|\nabla u|^{2(p-2)}||D^2u||^2\varphi_R^2 dx \right)^{\frac 12}\\
		&=&C(\mathcal{D}(2R))\left( \int_{\mathcal{D}(2R)}|\nabla u|^{p-2-\eta}||D^2u||^2\varphi_R^2|\nabla u|^{p-2+\eta}dx \right )^{\frac 12}\\\nonumber &\leq& C(\mathcal{D}(2R)) ||\nabla u||^{(p-2+\eta)/2}_{L^{\infty}(\mathbb{R}^N_+)} \left(\int_{\mathcal{D}(2R)}|\nabla u|^{p-2-\eta}||D^2 u||^2 dx \right)^{\frac 12},
	\end{eqnarray}
	where  $0\leq\eta<1$ is such that $\varphi^2_R|\nabla u|^{p-2+\eta}$ becomes bounded.
	Recall that we have proved
	$$
	\text{dist}(\mathcal{U},\{y=0\})>0.
	$$
 Using  \cite{DS}-Theorem 1.1.  we infer that
	$$\left(\int_{\mathcal{D}(2R)}|\nabla u|^{p-2-\eta}||D^2 u||^2 dx \right)^{\frac 12}\leq C.$$ Then by \eqref{eq:smm4} we obtain
	$$\int_{\mathcal U}|\nabla u|^{p-2}||D^2u||\varphi_R dx\leq C.$$
	Let us  prove $(ii)$. Recalling \eqref{eq:G}, we obtain
	$$
	h'_\varepsilon(t)=
	\begin{cases} 0 & \text{if  $t>2\varepsilon$} \\
		\frac{2\varepsilon}{t^2}& \text{if $\varepsilon< t<2\varepsilon$}
		\\ 0 & \text{if $0\leq t<\varepsilon$},
	\end{cases}
	$$
	and then $|\nabla u|h_\varepsilon'(|\nabla u|)$ tends to $0$
	almost everywhere in $\mathcal U$ as $\varepsilon$ goes to $0$; hence we have:
	$|\nabla u|h_\varepsilon'(|\nabla u|)\leq 2$.\\
	\
	
	\noindent Then by \eqref{VaScOoOoOoO}, \eqref{eq:smm3} and (I), (II) above,  passing to the limit
	as $\varepsilon \rightarrow 0$, we get:
	$$
	\int_{\mathcal U}\,
	|\nabla u|^{p-2}(\nabla u, \nabla \varphi_{R})dx
	\geq \sigma^+\int_{\mathcal U}\varphi_Rdx, \quad \forall R>0.
	$$
	Recalling \eqref{Eq:Cut-off1},
	we have that there exists $C=C(\|\nabla u\|_{L^{\infty}(\mathbb{R}^N_+)})$ (not depending on $R$)
	such that:
	$$
	|\mathcal U \cap\{B_R'(0)\times \mathbb R\} | \leq \frac{C}{R}  |\mathcal U \cap\{B_{2R}'(0)\times \mathbb R\} |
	$$
	and $|\mathcal U \cap\{B_R'(0)\times \mathbb R\} |$ has polynomial growth. Therefore, by \cite{FMS}-Lemma 2.1
	it follows that $|\mathcal U \cap\{B_R'(0)\times \mathbb R\} |$ converges to zero if $R\rightarrow +\infty$, concluding the proof
	by contradiction.
\end{proof}

With the notations introduced at the beginning of the previous section, we set
\begin{equation}\label{MP1}
	\Lambda \,:=\,\Big\{\lambda\in\mathbb{R}^+\,\,:\,\, u\leq u_\mu \,\,\text{in}\,\, \Sigma_\mu \,\,\,\forall \mu<\lambda\Big\}\,
\end{equation}
and we define
\begin{equation}\label{barlambda}
	\bar \lambda \,:=\, \sup\,\Lambda\,.
\end{equation}
This value is well defined recalling Theorem \ref{monotone}.
We have the following:

\begin{prop}\label{lellalemma} Let  $1<p<2$ and let $u \in C^{1,\alpha}_{loc}({\mathbb{R}^N_+})\cap C^{0}(\overline{\mathbb{R}^N_+})$ be a solution  to \eqref{MP} under the assumptions {\bf (hp)}.  
	
	Assume $ 0 < \bar\lambda < + \infty$  and set $$W_\varepsilon:=\Big(u-u_{\bar \lambda+\varepsilon}\Big)\cdot \chi_{\{y\leqslant \bar{\lambda}+\vep\}}$$
	where $ \varepsilon >0$.

	Given $0<\delta <\frac{\blambda}{2}$ and $\rho>0$, there exists $\vep_0 >0 $ such that, for any $\vep\leqslant \vep_0$, it follows
	\[
	\text{Supp} \,W_\vep^+\subset \{\bar{\lambda}-\delta\leqslant y\leqslant\bar{\lambda}+\vep\}\cup \left(\bigcup_{x'\in\mathbb R^{N-1}} B_{x'}^\rho\right),
	\]
	where $B_{x'}^\rho$ is such that
	\begin{equation}\label{njgkdngkfkkjc}
		B_{x'}^\rho\subseteq\left\{y\in(0,\bar\lambda+\varepsilon)\,:\, |\nabla u(x',y)|<\rho,\,|\nabla u_{\bar \lambda +\varepsilon}(x',y)|<\rho\right\}.
	\end{equation}
\end{prop}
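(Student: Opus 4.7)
The plan is to argue by contradiction: if the conclusion fails, there exist $\delta,\rho>0$, a subsequence $\varepsilon_n\to 0^+$ and points $(x_n',y_n)\in\Sigma_{\bar\lambda+\varepsilon_n}$ satisfying $u(x_n',y_n)>u_{\bar\lambda+\varepsilon_n}(x_n',y_n)$, $y_n\leq\bar\lambda-\delta$, and at least one of $|\nabla u(x_n',y_n)|\geq\rho$, $|\nabla u_{\bar\lambda+\varepsilon_n}(x_n',y_n)|\geq\rho$. First I would observe that $y_n$ is uniformly bounded away from $\partial\mathbb R^N_+$: Lemma \ref{continuity} gives $u(x_n',y_n)\leq C_S y_n^\beta$ inside the strip $\Sigma_{\bar\lambda}$, while $u_{\bar\lambda+\varepsilon_n}(x_n',y_n)=u(x_n',2(\bar\lambda+\varepsilon_n)-y_n)\geq c_s\bar\lambda^\beta$ by the global lower barrier; for $y_n$ small enough the difference would become negative, contradicting $W_{\varepsilon_n}(x_n',y_n)>0$. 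Thus $y_n\geq\tau>0$ uniformly and, along a subsequence, $y_n\to\bar y\in[\tau,\bar\lambda-\delta]$.

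Next I would exploit $x'$-translation invariance by setting $\tilde u_n(x',y):=u(x'+x_n',y)$, so that each $\tilde u_n$ solves \eqref{MP}. The lower barrier $\tilde u_n(x)\geq c_s x_N^\beta$ keeps the singular term $\tilde u_n^{-\gamma}$ bounded on compact subsets of $\mathbb R^N_+$, so the interior $C^{1,\alpha}_{loc}$ estimates \cite{Di} produce, up to a further subsequence, $\tilde u_n\to u_\infty$ in $C^1_{loc}(\mathbb R^N_+)$ with $u_\infty$ a solution to \eqref{MP} still satisfying the two-sided barriers of Lemma \ref{continuity}. The inequality $u\leq u_{\bar\lambda}$ in $\Sigma_{\bar\lambda}$ (valid by the definition of $\bar\lambda$) passes to the limit as $u_\infty\leq(u_\infty)_{\bar\lambda}$ in $\Sigma_{\bar\lambda}$, and $u_\infty$ is monotone nondecreasing there. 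The strict inequality $u(x_n',y_n)>u_{\bar\lambda+\varepsilon_n}(x_n',y_n)$ passes to the limit as $u_\infty(P)=(u_\infty)_{\bar\lambda}(P)$ at $P:=(0,\bar y)$. Since the nonnegative $C^1$ function $(u_\infty)_{\bar\lambda}-u_\infty$ attains its minimum at the interior point $P\in\Sigma_{\bar\lambda}$, necessarily $\nabla u_\infty(P)=\nabla(u_\infty)_{\bar\lambda}(P)$. The gradient lower bound along $(x_n',y_n)$ then yields, by $C^1$-convergence and the matching of gradients, $|\nabla u_\infty(P)|\geq\rho>0$, so that $P\notin Z_{u_\infty}$.

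I would then apply the strong comparison principle for the $p$-Laplacian (in the spirit of \cite{Da}, as used in \cite{FMS, FMS MathAnn}) to $u_\infty$ and $(u_\infty)_{\bar\lambda}$: both solve $-\Delta_p w=w^{-\gamma}+f(w)$ in $\Sigma_{\bar\lambda}$, $u_\infty\leq(u_\infty)_{\bar\lambda}$ there, and equality occurs at the non-critical point $P$. This forces $u_\infty\equiv(u_\infty)_{\bar\lambda}$ on the whole connected component $\mathcal U$ of $\Sigma_{\bar\lambda}\setminus Z_{u_\infty}$ containing $P$, producing a non-empty local symmetry region for $u_\infty$ and contradicting Proposition \ref{trittofritto}. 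The main obstacle I foresee is the justification of the strong comparison principle in the degenerate regime $1<p<2$ with a singular right-hand side; this is handled by restricting to $\Sigma_{\bar\lambda}\setminus Z_{u_\infty}$, where the linearized operator is uniformly elliptic and $s\mapsto s^{-\gamma}+f(s)$ is locally Lipschitz on the strictly positive values attained by $u_\infty$ on compacts.
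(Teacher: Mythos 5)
Your proposal is correct and follows essentially the same route as the paper: contradiction via a sequence $\varepsilon_n\to 0$, the barrier bounds of Lemma \ref{continuity} to keep $y_n$ away from $\partial\mathbb{R}^N_+$, $x'$-translation plus DiBenedetto $C^{1,\alpha}_{loc}$ estimates and Ascoli to pass to a limiting solution, matching of gradients at the interior touching point to transfer the bound $|\nabla|\geq\rho$, and the strong comparison principle on a connected component of $\Sigma_{\bar\lambda}\setminus Z$ to contradict Proposition \ref{trittofritto}. No essential differences from the paper's argument.
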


\begin{proof}
	
	Assume by contradiction that there exists $\delta >0$, with $\displaystyle 0<\delta <\frac{\blambda}{2}$,
	such that, given any $\vep_0>0$, we find $\vep\leqslant \vep_0$ and
	$x_\vep=(x'_\vep,y_\vep)$ such that:
	\begin{itemize}
		\item [$(i)$]  $u(x'_\vep,y_\vep)\geqslant u_{\bar{\lambda}+\vep}(x'_\vep,y_\vep)$;
		\item  [$(ii)$] $x_\varepsilon$ belongs to the set
		$$
		\Big\{(x',y) \in\mathbb R^N :  y_\vep\leqslant \bar{\lambda}-\delta\Big\}, $$
		and it holds the alternative: 
\begin{center}
either $|\nabla u (x_\vep)|\geq\rho$ or $|\nabla u_{\bar \lambda +\varepsilon}(x_\varepsilon)|\geq\rho
		$.
\end{center}
	\end{itemize}
	Take now $\displaystyle \vep_0=\frac{1}{n}$, then there exists $\displaystyle \vep_n \leqslant \frac{1}{n}$  and
	a sequence $$
	x_n=(x'_{n},y_{n})=(x'_{\vep_n},y_{\vep_n}),
	$$
	such that
	$$
	u(x'_n,y_n)\geqslant u_{\bar{\lambda}+\vep_n}(x'_n,y_n),
	$$
	and satisfying $(ii)$ above. Up to subsequences we may assume that:
	$$
	y_n\rightarrow y_0 \quad \text{as} \quad n\rightarrow +\infty,\qquad
	\text{ with } \quad0<\underline \delta\leqslant y_0\leqslant \bar{\lambda}-\delta\,, 
	$$
	where the bound $ y_0 \geqslant \underline \delta>0 $ follows by \eqref{controllo1} and \eqref{controllo2} and \eqref{riflessa}.
	Let us now define
	\begin{equation}\label{cazzo}
		\tu_n(x',y)=u(x'+x'_n,y),
	\end{equation}
	satisfying 
 \begin{equation}\label{pun}
		\int_{\mathbb{R}^N_+}|\nabla \tilde u_n|^{p-2}(\nabla \tilde u_n,\nabla \varphi)dx=\int_{\mathbb{R}^N_+}\, \left ( \frac{1}{\tilde u_n^\gamma}+f(\tilde u_n) \right)\varphi dx \quad \forall \varphi \in C^{\infty}_c(\mathbb{R}^N_+).
	\end{equation}
	Since  $u$  is  bounded on every strip $\Sigma_\eta$, $ \eta>0$, as before,  by DiBenedetto-$C^{1,\alpha}$ estimates applied to the problem satisfied by $u_n$,  Ascoli's Theorem and a standard diagonal process we get that :
	\begin{equation}\label{roccaseccatrubis}
		\tu_n\overset{C^{1,\alpha'}_{loc}({{\mathbb{R}^N_+}})} {\longrightarrow}\tu,
	\end{equation}
	(up to subsequences) for $\alpha '<\alpha$.
	\
	
	\noindent \emph{We claim that}
	\begin{itemize}
		\item[-] $\tu\geqslant 0 $ in $\mathbb{R}^N_+$, with $\tu(x,0)=0$ for every $x \in \mathbb{R}^{N-1}$;
		\item [-] $\tu\leqslant \tu_{\blambda}$ in $\Sigma_{\blambda};$
		\item[-] $\tu(0,y_0)= \tu_{\blambda}(0,y_0)$;
		\item [-] $|\nabla \tu (0,y_0)|\geq \rho$.
	\end{itemize}\ \\
	
	\noindent Indeed $\tu\geqslant 0 $ in $\mathbb{R}^N_+$ by construction and $\tu(x,0)=0$ or every $x \in \mathbb{R}^{N-1}$ thanks to the bound in \eqref{controllo1} and \eqref{controllo2}.
	Moreover by continuity $\tu\leqslant \tu_{\blambda}$ in $\Sigma_{\blambda}$ and $\tu(0,y_0)\geqslant \tu_{\blambda}(0,y_0)$.
	Actually there holds: $\tu(0,y_0)= \tu_{\blambda}(0,y_0)$.\\
	Finally, at $x_0=(0,y_0)$
	(where $\tu(0,y_0)= \tu_{\blambda}(0,y_0)$)
	we have that $\nabla \tilde u(0,y_0)=\nabla\tilde u_{\bar \lambda}(0,y_0)$,
	because $x_0$ is an interior minimum point for the function
	$w(x):=\tilde u_{\blambda}(x)-\tilde u(x)\geq 0$.
	For all $n$ we have
	$|\nabla u (x_n)|\geq\rho$ or $|\nabla u_{\bar \lambda +\varepsilon_{n}}(x_n)|\geq\rho$,
	and, using the uniform~$C^1$ convergence on compact set,  we get: $|\nabla \tu (0,y_0)|\geq \rho$.\\
	Passing to the limit in \eqref{pun} we obtain that 
	\begin{equation}\nonumber
		\int_{\mathbb{R}^N_+}|\nabla \tilde{u}|^{p-2}(\nabla \tilde{u},\nabla \varphi)dx=\int_{\mathbb{R}^N_+}\, \left ( \frac{1}{\tilde{u}^\gamma}+f(\tilde{u}) \right)\varphi dx \quad \forall \varphi \in C^{\infty}_c(\mathbb{R}^N_+).
	\end{equation}
	\noindent Note that, furthermore, $\tu > 0$ in $\mathbb{R}^N_+$ thanks to  \eqref{controllo1}.
	  Moreover we have  $\tu\leqslant \tu_{\blambda}$ in $\Sigma_{\blambda}$
	and   $\tu(0,y_0)= \tu_{\blambda}(0,y_0)$ so that,  by the strong comparison principle \cite{PS},
	  we deduce that
	$\tu= \tu_{\blambda}$ in the connected component  $\mathcal{U}$, of $\Sigma_{\bar\lambda}\setminus Z_u$, containing the point $(0,y_0)$, where $|\nabla \tu (0,y_0)|\geq \rho$. This provides a contradiction by  Proposition \ref{trittofritto}  concluding the proof.
\end{proof}

\subsection{Recovering Compacteness: the case $p>2$ }

\begin{prop}\label{lellalemmacaz} Let  $p\geq 2$ and let $u \in C^{1,\alpha}_{loc}({\mathbb{R}^N_+})\cap C^{0}(\overline{\mathbb{R}^N_+})$ be a solution  to \eqref{MP} under the assumptions {\bf (hp)}.  
	
	Assume $ 0 < \bar\lambda < + \infty$  and set $$W_\varepsilon:=\Big(u-u_{\bar \lambda+\varepsilon}\Big)\cdot \chi_{\{y\leqslant \bar{\lambda}+\vep\}}$$
	where $ \varepsilon >0$.

	Given $0<\delta <\frac{\blambda}{2}$,  there exists $\vep_0 >0 $ such that, for any $\vep\leqslant \vep_0$, it follows
	\[
	\text{supp} \,W_\vep^+\subset \{\bar{\lambda}-\delta\leqslant y\leqslant\bar{\lambda}+\vep\}.
	\]
\end{prop}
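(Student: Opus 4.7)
The plan is to mirror the contradiction scheme of Proposition \ref{lellalemma}, exploiting the simplification that for $p\geq 2$ the weight $|\nabla u|^{p-2}$ is locally bounded, so the critical set plays no detrimental role and no region $\bigcup_{x'} B_{x'}^\rho$ needs to be excised from the support bound.

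I would argue by contradiction: if the statement failed there would exist $0<\delta<\bar\lambda/2$ and sequences $\varepsilon_n\searrow 0$, $x_n=(x'_n,y_n)$ with $y_n\leq\bar\lambda-\delta$ and $u(x_n)\geq u_{\bar\lambda+\varepsilon_n}(x_n)$. Translating in the tangential variables by $\tilde u_n(x',y):=u(x'+x'_n,y)$, each $\tilde u_n$ still solves \eqref{MP}. Using hypothesis (iii), the two-sided bounds \eqref{controllo1}--\eqref{controllo2} and DiBenedetto's $C^{1,\alpha}$ estimates, a standard diagonal extraction produces a subsequence $\tilde u_n\to\tilde u$ in $C^{1,\alpha'}_{loc}(\mathbb R^N_+)$; passing to the limit (with $f$ locally Lipschitz), $\tilde u$ still solves \eqref{MP}. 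Up to a further subsequence $y_n\to y_0$, and the boundary estimates guarantee $0<\underline\delta\leq y_0\leq \bar\lambda-\delta$, $\tilde u\leq \tilde u_{\bar\lambda}$ in $\Sigma_{\bar\lambda}$ (by the definition of $\bar\lambda$), and the interior touching $\tilde u(0,y_0)=\tilde u_{\bar\lambda}(0,y_0)$.

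The decisive step is then the strong comparison principle for the $p$-Laplacian in the regime $p\geq 2$, which applied at the interior contact point $(0,y_0)$ forces $\tilde u\equiv \tilde u_{\bar\lambda}$ on the connected component $\mathcal U$ of $\Sigma_{\bar\lambda}\setminus Z_{\tilde u}$ containing $(0,y_0)$. To turn this local identity into a contradiction I would observe that $\tilde u(x',0)=0$, while $\tilde u_{\bar\lambda}(x',0)=\tilde u(x',2\bar\lambda)\geq c_s(2\bar\lambda)^{\beta}>0$ by \eqref{controllo1}, so $\mathcal U$ cannot touch $\{y=0\}$ and is therefore a genuine local-symmetry subregion of $\Sigma_{\bar\lambda}$.

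The main obstacle I expect is ruling out such a local-symmetry region $\mathcal U$; the argument is an analog of Proposition \ref{trittofritto}, and substantially simpler for $p\geq 2$. One still uses the test function $\Psi=h_\varepsilon(|\nabla\tilde u|)\chi_{\mathcal U}\varphi_R$ to produce an inequality of the form $\int_{\mathcal U}\varphi_R\,dx\leq \frac{C}{R}|\mathcal U\cap(B'_{2R}(0)\times\mathbb R)|$, relying on the lower bound \eqref{fffffff} which persists because $y_0\geq\underline\delta>0$ keeps $\tilde u$ bounded away from zero on $\mathcal U$; however, the integrability $|\nabla \tilde u|^{p-2}\|D^2 \tilde u\|\varphi_R\in L^1$ is now automatic since $|\nabla \tilde u|^{p-2}$ is locally bounded for $p\geq 2$, so the $\eta$-weighted H\"older argument of \eqref{eq:smm4} is no longer needed. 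Letting $R\to\infty$ then yields $|\mathcal U|=0$, concluding the proof.
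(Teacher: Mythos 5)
Your compactness/translation setup coincides with the paper's, but the ``decisive step'' has a genuine gap. You apply the strong comparison principle ``at the interior contact point $(0,y_0)$'' to conclude $\tilde u\equiv \tilde u_{\bar\lambda}$ on the connected component $\mathcal U$ of $\Sigma_{\bar\lambda}\setminus Z_{\tilde u}$ containing $(0,y_0)$. This presupposes that $(0,y_0)\notin Z_{\tilde u}$, i.e.\ $\nabla\tilde u(0,y_0)\neq 0$, and nothing in your argument guarantees it: for $p\geq 2$ the statement being proved contains no excised balls $B^{\rho}_{x'}$, so its negation gives points $x_{\varepsilon_n}$ with $u\geq u_{\bar\lambda+\varepsilon_n}$ and $y_{\varepsilon_n}\leq\bar\lambda-\delta$ but \emph{no} lower bound on the gradients (contrast with Proposition \ref{lellalemma}, where exactly this alternative $|\nabla u(x_\varepsilon)|\geq\rho$ or $|\nabla u_{\bar\lambda+\varepsilon}(x_\varepsilon)|\geq\rho$ is built into the contradiction hypothesis and passes to the limit). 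Hence the contact point may well be a critical point of both $\tilde u$ and $\tilde u_{\bar\lambda}$ (their gradients coincide there, but may vanish), in which case there is no component of $\Sigma_{\bar\lambda}\setminus Z_{\tilde u}$ containing it and the classical (nondegenerate) strong comparison principle cannot be invoked. This is precisely the point where the paper's proof differs: for $p\geq 2$ it uses the strong maximum and comparison principle of \cite{DS2006}-Theorem 1.4, which works \emph{across} the critical set because near $(0,y_0)$ one is away from $\partial\mathbb{R}^N_+$, so the right-hand side $\frac{1}{u^\gamma}+f(u)$ is locally Lipschitz and strictly positive by {\bf (hp)}; its conclusion is $\tilde u\equiv\tilde u_{\bar\lambda}$ in the whole strip $\Sigma_{\bar\lambda}$, which is immediately absurd since $\tilde u(x',0)=0$ while $\tilde u_{\bar\lambda}(x',0)=\tilde u(x',2\bar\lambda)>0$ by \eqref{controllo1}. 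In particular no analogue of Proposition \ref{trittofritto} is needed for $p\geq 2$; if you do invoke a comparison principle strong enough to cross the critical set, your whole second half (the $h_\varepsilon,\varphi_R$ test-function argument) becomes superfluous, and if you only use the classical one, the first half does not get you a local symmetry region to feed into it.

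A secondary inaccuracy: even in your proposed $p\geq2$ version of Proposition \ref{trittofritto}, the integrability $|\nabla\tilde u|^{p-2}\|D^2\tilde u\|\varphi_R\in L^1(\mathcal U)$ is not ``automatic'' from the local boundedness of $|\nabla\tilde u|^{p-2}$, since one still needs summability of the Hessian; one would still run the H\"older argument of \eqref{eq:smm4} (now simply with $\eta=0$) together with the weighted estimate $\int|\nabla u|^{p-2}\|D^2u\|^2<\infty$ from \cite{DS}. But this is a side issue: the essential missing ingredient is the degenerate strong comparison principle of \cite{DS2006} (or an equivalent tool) at a possibly critical contact point, which is what actually distinguishes the $p\geq2$ case from the $1<p<2$ case in the paper.
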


\begin{proof}
	Exactly as in the proof of Proposition \ref{lellalemma} we 
	assume by contradiction that there exists $\delta >0$, with $\displaystyle 0<\delta <\frac{\blambda}{2}$,
	such that, given any $\vep_0>0$, we find $\vep\leqslant \vep_0$ and
	$x_\vep=(x'_\vep,y_\vep)$ such that:
	\begin{itemize}
		\item [$(i)$]  $u(x'_\vep,y_\vep)\geqslant u_{\bar{\lambda}+\vep}(x'_\vep,y_\vep)$;
		\item  [$(ii)$] $x_\varepsilon$ belongs to the set $\Sigma_{\bar{\lambda}-\delta}$.
	\end{itemize}
	Take now $\displaystyle \vep_0=\frac{1}{n}$, then there exists $\displaystyle \vep_n \leqslant \frac{1}{n}$  and
	a sequence $$
	x_n=(x'_{n},y_{n}):=(x'_{\vep_n},y_{\vep_n}),
	$$
	such that
	$$
	u(x'_n,y_n)\geqslant u_{\bar{\lambda}+\vep_n}(x'_n,y_n),
	$$
	and satisfying conditions $(ii)$ above. Up to subsequences we may assume that:
	$$
	y_n\rightarrow y_0, \quad \text{as} \quad n\rightarrow +\infty,\qquad
	\text{ with } \quad0<\underline \delta\leqslant y_0\leqslant \bar{\lambda}-\delta\,, 
	$$
	where the bound $ y_0 \geqslant \underline \delta>0 $ follows as in the proof of Proposition \ref{trittofritto}. 
	Let us now define
	\begin{equation}\label{cazzocaz}
		\tu_n(x',y)=u(x'+x'_n,y).
	\end{equation}
	
	Since both $u$  is  bounded on every strip $\Sigma_\eta$, $ \eta>0$, as before, by DiBenedetto-$C^{1,\alpha}$ estimates, Ascoli's Theorem and a standard diagonal process we get that:
	\begin{equation}\label{roccaseccatrubiscaz}
		\tu_n\overset{C^{1,\alpha'}_{loc}({ {\mathbb{R}^N_+}})} {\longrightarrow}\tu,
	\end{equation}
	(up to subsequences) for $\alpha '<\alpha$.\ \\
By continuity $\tu\leqslant \tu_{\blambda}$ in $\Sigma_{\blambda}$ and by $(ii)$, $\tu(0,y_0)\geqslant \tu_{\blambda}(0,y_0)$.\\
Taking into account that
 \begin{equation*}
		\int_{\mathbb{R}^N_+}|\nabla u_n|^{p-2}(\nabla u_n,\nabla \varphi)dx=\int_{\mathbb{R}^N_+}\, \left ( \frac{1}{u_n^\gamma}+f(u_n) \right)\varphi dx \quad \forall \varphi \in C^{\infty}_c(\mathbb{R}^N_+),
	\end{equation*}
        passing to the limit we obtain that 
	\begin{equation}\nonumber
		\int_{\mathbb{R}^N_+}|\nabla \tilde{u}|^{p-2}(\nabla \tilde{u},\nabla \varphi)dx=\int_{\mathbb{R}^N_+}\, \left ( \frac{1}{\tilde{u}^\gamma}+f(\tilde{u}) \right)\varphi dx \quad \forall \varphi \in C^{\infty}_c(\mathbb{R}^N_+).
	\end{equation}
	\noindent By construction  $\tu \geqslant 0$ in $\mathbb{R}^N_+$; indeed by \eqref{controllo1} we deduce that  $\tu >0$. 
	Since $\tu(0,y_0)= \tu_{\blambda}(0,y_0)$ with $ y_0 \geqslant \underline \delta>0$, we are far from $\partial \mathbb{R}^N_+$ and the right term of \eqref{MP} is Lipschitz.  Moreover $\tu\leqslant \tu_{\blambda}$ in $\Sigma_{\blambda}$ so that applying here the  strong comparison principle \cite{DS2006}-Theorem 1.4, we deduce that
	$\tu= \tu_{\blambda}$ in  $\Sigma_{\bar\lambda}$,  concluding the proof by contradiction thanks to the Dirichlet condition.
\end{proof}

The proof is based on the moving planes procedure.  By Theorem \ref{monotone} the set $\Lambda$ defined in \eqref{MP} is not empty and $ \bar\lambda \in (0, +\infty]$. To conclude the proof we need to show that $\bar\lambda=\infty$.\\
\begin{proof}[Proof of Theorem \ref{mainthmczz}]

Assume that $ \bar\lambda$ is finite.  


By Proposition  \ref{lellalemma} we have that, given some small $\delta,\rho>0$, 
 for any $\vep>0$ sufficiently small 
it follows

\[
\text{Supp} \,W_\vep^+\subset \{\bar{\lambda}-\delta\leqslant y\leqslant\bar{\lambda}+\vep\}\cup \left(\bigcup_{x'\in\mathbb R^{N-1}} B_{x'}^\rho\right)\quad \text{for}\,\,1<p<2,
\]
while 

\[
\text{Supp} \,W_\vep^+\subset \{\bar{\lambda}-\delta\leqslant y\leqslant\bar{\lambda}+\vep\}\quad \text{for}\,\,p\geq 2,
\]
as in Proposition~\ref{lellalemmacaz}, where $W_\vep^+ =(u-u_{\bar{\lambda}+\vep})^+\chi_{\{y\leqslant
	\bar{\lambda}+\vep\}}$ and $B_{x'}^\rho$ is defined in \eqref{njgkdngkfkkjc}. \ \\

Applying \cite{FMRS}-Theorem 1.7 when  $1<p<2$ and \cite{FMS PISA}-Proposition~3.3 for $p>2$, we deduce that $ u \le u_{\bar{\lambda}+\vep} $ in $\Sigma_{\bar\lambda+ \vep}$, which contradicts the definition of $\blambda$ and yields that $\blambda=\infty$. This, in turn,  implies the desired monotonicity of $u$, that is $\displaystyle  \frac{\partial u}{\partial x_N}(x',x_N)\geqslant 0$ in  $\mathbb{R}^N_+ $.

\end{proof}

\begin{center}
{\bf Acknowledgements}
\end{center} 
L. Montoro, L. Muglia, B. Sciunzi  are partially supported by PRIN project P2022YFAJH 003 (Italy): Linear and nonlinear PDEs; new directions
and applications.\ \\ \ \\
%
%
\noindent All the authors are partially supported also by Gruppo Nazionale per l' Analisi Matematica, la Probabilit\`a  e le loro Applicazioni (GNAMPA) of the Istituto Nazionale di Alta Matematica (INdAM).

\vspace{1cm}


\

\begin{center}
{\sc Data availability statement}

\

All data generated or analyzed during this study are included in this published article.
\end{center}

\

\begin{center}
	{\sc Conflict of interest statement}
	
	\
	
	The authors declare that they have no competing interest.
\end{center}

\bibliographystyle{elsarticle-harv}

\end{document}